  \newlength{\defbaselineskip}
  \newcommand{\setlinespacing}[1]%
                               {\setlenght{\baselineskip}{#1 \defbaselineskip}}
  \def\R{\mbox{I\hspace{-.15em}R}}
  \theoremstyle{plain}
\newtheorem{theorem}{Theorem}[section]
\newtheorem{proposition}[theorem]{Proposition}
\newtheorem{example}[theorem]{Example}
\theoremstyle{remark}
\newtheorem{remark}[theorem]{Remark}
\newtheorem{definition}{Definition}[section]
\numberwithin{equation}{section}
\begin{document}
\begin{center}
{\bf The Apple Doesn't Fall Far From the (Metric) Tree: The Equivalence of Definitions}
\end{center}
\vspace{.15 cm}
\begin{center}
\small{Asuman G. AKSOY and Sixian JIN}
\end{center}

\date{March 20, 2013}
\mbox{~~~}\\
\mbox{~~~}\\
\small\mbox{~~~~}{\bf Abstract.} {\footnotesize In this paper we prove the equivalence of definitions for metric trees and for $\delta$-hyperbolic spaces. We point out how these equivalences can be used to understand the geometric and metric properties of $\delta$-hyperbolic spaces and its relation to CAT($\kappa$) spaces.}
\\
\footnotetext{{\bf Mathematics Subject Classification (2000):}
54E35, 54E45, 54E50, , 47H09. \vskip1mm {\bf Key words: } Metric tree, Hyperbolic spaces, CAT($\kappa$) spaces, Hyperconvex spaces.}

\section{ Introduction}

A metric space is a metric tree  if and only if it is $0$-hyperbolic and geodesic. In other words, a geodesic metric space  is said to be a metric tree (or an $\mathbb{R}$-tree, or a T-tree) if it is $0$-hyperbolic in the sense of Gromov that all of its geodesic triangles are isometric to tripods. It is well known that every $0$-hyperbolic metric space embeds isometrically into a metric tree (see \cite{Dr}, \cite{godard}) and the construction of metric trees is related to the asymptotic geometry of hyperbolic spaces (see \cite{Brid}, \cite {DP}). Metric trees are not only described by different names but are also given by different definitions. In the following, we state two widely used definitions of a metric tree:

\begin{definition}\label{Def1}
  An $\mathbb{R}$-tree is a metric space $M$ such that for every
$x$ and $y$ in $M$ there is a unique arc between $x$ and $y$ and this arc is
isometric to an interval in $\mathbb{R}$ (i.e., is a geodesic segment).\\
\end{definition}
 Recall that for $x,y \in M$ a \emph{geodesic segment} from $x$ to $y$  denoted by $[x,y]$ and is the image of an isometric embedding $\alpha : [a,b]\rightarrow M$ such that $\alpha(a)=x$ and $\alpha (b)=y$.  A geodesic metric space is a metric space in which every pair of points is joined by a (not necessarily unique) geodesic segment.

\begin{definition}\label{Def2}

An $\mathbb{R}$-tree is a metric space $M$ such that\medskip

(i) there is a unique geodesic segment denoted by $[x,y]$ joining each pair of
points $x$ and $y$ in M;\medskip

(ii) if $\left[  y,x\right]  \cap\left[  x,z\right]  =\left\{  x\right\}
\Rightarrow\left[  y,x\right]  \cup\left[  x,z\right]  =\left[  y,z\right]  .$\\
\end{definition}

Condition (ii) above simply states that if two segments intersect in a single point then their union is a segment too. Note that $\mathbb{R}^n$ with the Euclidean metric satisfies the first condition. It fails, however, to satisfy the second condition.\\
The study of  metric trees is motivated by many subdisciplines of mathematics \cite{Ev}, \cite{Tits}, biology, medicine
and computer science. The relationship between metric trees and biology and medicine
stems from the construction of phylogenetic trees \cite{Semp}; and
concepts of ``string matching" in computer science are closely
related with the structure of metric trees \cite{Bart}.
Unlike metric trees, in an ordinary tree  all the edges are assumed to have
the same length and therefore the metric is not often stressed. However, a
metric tree is a generalization of an ordinary tree that allows
for different edge lengths.  For example, a connected graph without
loops is a metric tree.  Metric trees also arise naturally in the study of group isometries of hyperbolic
spaces. For metric properties of trees we refer to \cite{buneman}. Lastly, \cite{mmot} and \cite{mo} explore the topological characterization of metric trees.
 For an overview of geometry, topology, and group theory applications of metric trees, consult \cite{Best}. For a complete discussion of these spaces
and their relation to $CAT (\kappa)$ spaces, see the well known monograph by Bridson and Haefliger \cite{Brid}. Recall that a complete geodesic metric space is said to be a \emph{CAT}($\kappa$) space (or a Hadamard space) if it is geodesically connected and if every geodesic triangle in $X$  is at least as ``thin" as its comparison triangle in, respectively, the classical spherical space $\mathbb{S}^2_{\kappa}$ of curvature $\kappa$ if  $\kappa > 0$, the Euclidean plane if $\kappa= 0$, and the classical hyperbolic space of curvature $\kappa$ if $\kappa < 0$.\\
Given a metric $d(x,y)$, we denote it by $xy$. We also say that a point $z$ is \textit{between} $x$ and $y$ if $xy = xz + zy$. We will often denote this by $xzy$. It is not difficult to prove that in any metric space, the elements of a metric segment from $x$ to $y$ are necessarily between $x$ and $y$, and in a metric tree, the elements between $x$ and $y$ are the elements in the unique metric segment from $x$ to $y$. Hence, if $M$ is a metric tree and $x,y \in M$, then $$[x,y] = \{ z \in M : xy = xz + zy \}.$$  The following is an example of a metric tree. For more examples see \cite{AkTi}.

\begin{example}(The Radial Metric)\label{E:radial}
    Define $d: \mathbb{R}^2 \times \mathbb{R}^2 \to \mathbb{R}^+$ by:
    \[
        d(x,y) =
            \begin{cases}
            \|x-y\| & \text{if $x = \lambda \, y$ for some $\lambda \in \R$,}\\
            \|x\| + \| y \| & \text{otherwise.}
            \end{cases}
    \]
    We can observe that the $d$ is in fact a metric and that $(\mathbb{R}^2,d)$ is a metric tree.
\end{example}
 It is well known that any complete, simply connected Riemannian manifold having non-positive curvature is a $CAT(0)$-space. Other examples include the complex Hilbert ball with the hyperbolic metric (see \cite{Goebel}), Euclidean buildings (see \cite{Brown}) and classical hyperbolic spaces.  If a space is $CAT(\kappa)$ for some $\kappa < 0$ then it is automatically a $CAT(0)$-space. In particular, metric trees are a sub-class of $CAT(0)$-spaces, and we note the following:
 \begin{proposition}
 If a metric space is $CAT(\kappa)$ space for all $\kappa$, then it is a metric tree.
 \end{proposition}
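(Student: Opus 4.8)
The plan is to prove that being CAT($\kappa$) for all $\kappa$ forces all geodesic triangles to degenerate into tripods, which by the Gromov characterization recalled in the introduction is precisely the condition for a metric tree. The strategy exploits the comparison inequality in the limit $\kappa \to -\infty$: as the curvature bound becomes more and more negative, the comparison triangles in the model space $\mathbb{H}^2_\kappa$ of curvature $\kappa$ become ``thinner and thinner'', and in the limit any geodesic triangle in $X$ must be as thin as a degenerate one, i.e.\ a tripod.

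First I would fix a geodesic triangle $\Delta = \Delta(p,q,r)$ in $X$ with vertices $p,q,r$ and three geodesic sides. By hypothesis $X$ is CAT($\kappa$) for every $\kappa<0$, so for each such $\kappa$ there is a comparison triangle $\bar\Delta_\kappa$ in the model hyperbolic plane of curvature $\kappa$ with the same side lengths, and every point on a side of $\Delta$ is no farther from a point on another side than the corresponding comparison points are in $\bar\Delta_\kappa$. The key analytic input is that the model space of curvature $\kappa$ is obtained from $\mathbb{H}^2$ (curvature $-1$) by rescaling the metric by $1/\sqrt{-\kappa}$, so a triangle of \emph{fixed} side lengths, viewed in $\mathbb{H}^2_\kappa$, corresponds to a triangle in $\mathbb{H}^2$ whose side lengths are scaled \emph{up} by $\sqrt{-\kappa}$. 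As $\kappa \to -\infty$ these rescaled triangles become arbitrarily ``large'' in $\mathbb{H}^2$, and large hyperbolic triangles are uniformly thin: the thinness constant $\delta$ of $\mathbb{H}^2$ is independent of scale, so when measured back in $\mathbb{H}^2_\kappa$ the comparison triangle $\bar\Delta_\kappa$ has thinness $\delta/\sqrt{-\kappa} \to 0$.

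Next I would transfer this to $X$. The CAT($\kappa$) inequality gives, for the interior point realizing the Gromov product or for any point on a side of $\Delta$, a distance bound in $X$ controlled by the thinness of $\bar\Delta_\kappa$. Letting $\kappa \to -\infty$ forces the corresponding distances in $X$ to vanish, which shows that $\Delta$ is itself a tripod: all three sides share a common central point (the median point), and each side coincides with the union of two of the three legs. Since $\Delta$ was an arbitrary geodesic triangle, every geodesic triangle in $X$ is isometric to a tripod, i.e.\ $X$ is $0$-hyperbolic. Because CAT($\kappa$) spaces are by definition complete and geodesic, the characterization recalled in the introduction — a metric space is a metric tree if and only if it is $0$-hyperbolic and geodesic — yields that $X$ is a metric tree.

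The main obstacle I anticipate is making the limiting thinness estimate precise and uniform: one must verify carefully that the hyperbolic thinness constant is genuinely scale-invariant (so that rescaling by $\sqrt{-\kappa}$ really drives it to zero) and that the comparison inequality can be applied uniformly across all $\kappa$ to the \emph{same} triangle $\Delta$ in $X$. A cleaner alternative that avoids the asymptotic estimate is to argue that CAT($\kappa$) for all $\kappa$ already implies CAT($0$) (as noted in the text, $\kappa<0$ implies CAT($0$)), establish uniqueness of geodesics from the CAT($0$) property, and then use the sharp four-point or thin-triangle condition in the $\kappa\to-\infty$ limit to force the Gromov product inequality with $\delta = 0$; I would present whichever of these routes keeps the estimate most transparent.
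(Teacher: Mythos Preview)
The paper does not supply its own proof of this proposition; it simply refers the reader to p.~159 of Bridson--Haefliger \cite{Brid}. Your proposal is correct and is essentially the standard argument found there: one uses that $\mathbb{H}^2_\kappa$ is $\mathbb{H}^2$ rescaled by $1/\sqrt{-\kappa}$, so the universal thinness constant of hyperbolic triangles rescales to $\delta/\sqrt{-\kappa}\to 0$ as $\kappa\to -\infty$, and the CAT($\kappa$) comparison then forces every geodesic triangle in $X$ to be a tripod. The only point to tighten is the transfer step: make explicit that for a point $x$ on a side of $\Delta$ and its comparison point $\bar x$, the nearest point $\bar y$ on the other two sides of $\bar\Delta_\kappa$ has a corresponding point $y$ in $\Delta$ with $d(x,y)\le d(\bar x,\bar y)$, so the thinness bound passes from $\bar\Delta_\kappa$ to $\Delta$ directly from the CAT($\kappa$) inequality.
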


 For the proof of the above proposition, see p. $159$ of \cite{Brid}. Note that if a Banach space is a $CAT(\kappa)$  space for some $\kappa$ then it is necessarily a Hilbert space and $CAT(0)$. The property that distinguishes the metric trees from the $CAT(0)$ spaces is the fact that metric trees are  hyperconvex metric spaces. Properties of hyperconvex spaces and their relation to metric trees can be found in \cite{Akso}, \cite{ap}, \cite{isbell} and \cite{KirkH}.
We refer to \cite{Blum} for the properties of metric segments and to \cite{AkBo} and \cite{AkKh} for
 the basic properties of complete metric trees. In the following we list some of the properties of metric trees which will be used in the proof of Theorem 2.1.
\begin{enumerate}
 \item  (Uniform Convexity \cite{AkTi}). A metric tree $M$ is uniformly convex.
 \item (Projections are nonexpansive \cite{AkKh}). Metric projections on closed convex subsets of a metric tree are nonexpansive.
\end{enumerate}
Property $1$ above generalizes the classical Banach space notion of uniform convexity by defining the modulus of convexity for geodesic metric spaces. Let $C$ be a closed convex  subset (by convex we mean for all $x,y \in C$, we have $[x,y] \subset C$) of a metric tree $M$. If for every point $x\in M$ there exists a nearest point in $C$ to $x$, and if this point is unique, we denote this point by $P_{C}(x)$, and call the mapping $P_C$ the metric projection from $M$ into $C$. In Hilbert spaces, the metric projections on closed convex subsets are nonexpansive. In uniformly convex spaces, the metric projections are uniformly Lipschitzian.  In fact, they are nonexpansive if and only if the space is Hilbert. Property $2$ is remarkable in this context and this result is not known in hyperconvex spaces. However, the fact that the nearest point projection onto convex subsets of metric trees is nonexpsansive also follows from the fact that this is true in the more general setting of $CAT(0)$ spaces (see p. 177 of \cite {Brid}). We will use above properties in the proof of Theorem \ref{thm1}\\
A metric space $\left( X,d\right) $ is said to have the \emph{four-point property}
 if for each $x,y,z,p\in
X$\,$$ d\left( x,y\right) +d\left( z,p\right) \leq \max \left\{ d\left(
x,z\right) +d\left( y,p\right) ,d\left( x,p\right) +d\left( y,z\right)
\right\} $$ holds. The four-point property characterizes metric trees (see \cite{Akso}) thus, its natural extension characterizes $\delta$-hyperbolic spaces as seen in Definition \ref{Def4} below.\\
 In the following, we give three widely used definitions of $\delta$-hyperbolic spaces and references to how these definitions are utilized in order to describe geometric properties.\\
\begin{definition}\label{Def3}

 A metric space $(X,d)$ is  $\delta$-hyperbolic if for all $p,x,y,z\in X$,
\begin{equation}\label{def1}
\left( x,z\right) _{p}\geq \min \left\{ \left( x,y\right)
_{p},\left( y,z\right) _{p}\right\} -\delta
\end{equation}

where
$\left( x,z\right) _{p}\ =\frac{1}{2}\left( d\left( x,p\right)
+d\left( z,p\right) -d\left( x,z\right) \right)$  is the Gromov product.
\end{definition}
\bigskip
\begin{definition}\label{Def4}
 A metric space $\left( X,d\right) $ is
called $\delta -hyperbolic\ $for$\ \delta \geq 0$ if for each $x,y,z,p\in
X,\ d\left( x,y\right) +d\left( z,p\right) \leq \max \left\{ d\left(
x,z\right) +d\left( y,p\right) ,d\left( x,p\right) +d\left( y,z\right)
\right\} +2\delta$.
\end{definition}
\bigskip

\begin{definition}\label{Def5}
 A geodesic metric space $\left( X,d\right) \
$is$\ \delta -hyperbolic\ $if$\ $every geodesic triangle is $\delta -thin,
i.e., $given a geodesic triangle $\triangle xyz\subset X,\ \forall a\in %
\left[ x, y\right] ,\exists b\in\left[ x, z\right] \cup \left[ z, y\right] \ $%
such that$\ d\left( a,b\right) \leq \delta .\ \left[ x, y\right] $ is the
geodesic segment joint $x,y.$
\end{definition}
\bigskip

 Definition \ref{Def3} is the original definition for $\delta$-hyperbolic
spaces from Gromov in \cite{Gromov}, which depends on the notion of Gromov product. The Gromov product measures the failure of the
triangle inequality to be an equality. This definition appears in almost
every paper where $\delta$-hyperbolic spaces are discussed. Although one can provide a long list from our references we refer the reader to \cite{Vaisala}
,\cite{BonkSch},  \cite{Ibr1}, \cite{Ibr2}, \cite{godard} and \cite{Brid}.
 The Gromov product enables one to define ``convergence" at infinity and by this convergence the boundary of $X$, $\partial{X}$, can be defined. The metric on $\partial{X}$  is the so called
 ``visual metric" (see \cite{BonkSch} and \cite{Brid}). The advantage of Definition \ref{Def3} is that it facilitates the relationship between maps of $\delta$-hyperbolic spaces and maps of their boundary \cite {BonkSch},
\cite{Julian}. \\

Definition \ref{Def4} is a generalization of famous four-point property for which   $%
\delta =0$. The four-point property  plays an important role in metric
trees, for example, in \cite{Akso}, it is
shown that a metric space is a metric tree if and only if it is complete,
connected and satisfies the four-point property. However, it is also well known that a complete geodesic metric space $X$ is a CAT(0) if and only if it satisfies the four-point condition (see \cite{Brid}). Furthermore, in \cite{godard} Godard proves that for a given metric space $M$, each Lipschitz-free space $F(M)$ is isometric to a subspace of $L_1$. This is equivalent to $M$ satisfying four-point condition, and
 the fact that $M$ isomerically embedds into a metric tree.
The advantage of Definition \ref{Def4} is that we can write out the inequality
directly by distance of the metric space instead of by the Gromov product. In some
cases if we construct a metric with the distance function having a
particular form, it is easier to deal with distance inequality than Gromov
product inequality. For example, in \cite {Ibr1}, \cite{Ibr2} Ibragimov provides a method to
construct a Gromov hyperbolic space by ``hyperbolic filling" under a
proper compact ultrametric space and such a ``filling"  of a space contains points
which are metric balls in original ultrametric space and is equipped with a
distance function $h\left( A,B\right) =2\log \displaystyle \frac{diam\left( A\cup B\right)
}{\sqrt{diam\left( A\right) diam\left( B\right) }}.$ 

Note that the Definition \ref{Def5} of $\delta$-hyperbolic spaces requires that the underlying space is geodesic since it depends on geodesic triangles. Yet in \cite{BonkSch} , Bonk and
Schramm show that any $\delta$-hyperbolic space can be isometrically
embedded into a geodesic $\delta$-hyperbolic space. Thus one has the freedom of using Definition \ref{Def5}.\\

Furthermore, recall that we call $X$ \emph{hyperbolic} if it is $\delta$-hyperbolic for some $\delta \geq 0$. Sometimes $\delta$ is referred as a \emph{hyperbolicity constant} for $X$. Besides any tree being $0$-hyperbolic, any space of finite diameter, $\delta$, is $\delta$-hyperbolic and the hyperbolic plane $\mathbb{H}^2$ is $(\displaystyle \frac{1}{2} \log3)$-hyperbolic. In fact any simply connected Riemanian manifold with curvature bounded above by some negative constant  $-\kappa^2 < 0$ is $(\displaystyle \frac{1}{2\kappa} \log3)$-hyperbolic (see \cite{Brid}).

\bigskip



\section{Main Results }

\begin{theorem}\label{thm1}
Definition \ref{Def1}  and Definition \ref{Def2}  of metric trees are equivalent.
\end{theorem}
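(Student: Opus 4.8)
The plan is to establish the two directions of Definition \ref{Def1} $\Leftrightarrow$ Definition \ref{Def2} separately, treating Definition \ref{Def1} $\Rightarrow$ Definition \ref{Def2} by elementary arc topology and reserving the projection tools (Properties 1 and 2) for the harder converse. Throughout I use the paper's convention $xy = d(x,y)$ and call $q$ \emph{between} $x$ and $y$ when $xy = xq + qy$. The first observation I would record is that every geodesic segment is an arc: an isometric embedding $\alpha:[a,b]\to M$ is a continuous injection of a compact space into a Hausdorff space, hence a homeomorphism onto its image.

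For Definition \ref{Def1} $\Rightarrow$ Definition \ref{Def2}, condition (i) is immediate: Definition \ref{Def1} supplies a unique arc between $x$ and $y$ that happens to be a geodesic, and since any geodesic segment between these points is in particular an arc between them, uniqueness of the arc forces the geodesic to be unique. For condition (ii), assume $[y,x]\cap[x,z]=\{x\}$. Concatenating the two segments at the shared endpoint $x$ gives a continuous map of an interval onto $[y,x]\cup[x,z]$ which is injective, because $x$ is the only common point and is an endpoint of each piece; thus the union is an arc from $y$ to $z$, and by the uniqueness clause of Definition \ref{Def1} it must equal the unique arc $[y,z]$. This is exactly (ii).

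For the converse, the segment $[x,y]$ is already an arc, so the entire content is that it is the \emph{only} arc: I must show that an arbitrary arc $A = \gamma([0,1])$ from $x$ to $y$ equals $[x,y]$. Since $[x,y]$ is isometric to a compact interval and $A$ is connected with $x,y \in A$, it suffices to prove $A \subseteq [x,y]$; connectedness of $A$ then upgrades this to equality. The crux is therefore to show that every $q = \gamma(t_0) \in A$ is between $x$ and $y$. Here the clean case is transparent: the two subarcs $\gamma([0,t_0])$ and $\gamma([t_0,1])$ meet only at $q$, so if the geodesics satisfy $[x,q]\cap[q,y]=\{q\}$, then condition (ii) gives $[x,q]\cup[q,y]=[x,y]$ and hence $q\in[x,y]$.

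The work, and the main obstacle, is to rule out the remaining possibility that $A$ makes an \emph{excursion} off the segment, i.e. that some $q\in A$ has $w:=P_{[x,y]}(q)\neq q$. To organize this I would use the metric projection $P=P_{[x,y]}$, which is single-valued by uniform convexity (Property 1) and nonexpansive, hence continuous, by Property 2. Then $P\circ\gamma$ is a continuous path in $[x,y]$ from $x$ to $y$, so by the intermediate value theorem it is surjective and $[x,y]\subseteq P(A)$. The structural identity I would lean on is the tree relation $qc = q\,w + w\,c$ for all $c\in[x,y]$, which places $w$ on both $[x,q]$ and $[y,q]$ and shows the two geodesics share the whole branch $[w,q]$; tracking the projection onto this branch along $\gamma$ (it begins and ends at $w$ yet attains the far endpoint $q$) forces the arc to retrace $[w,q]$, contradicting injectivity of $\gamma$. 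I expect this excursion-exclusion step to be the delicate part: it is precisely where condition (ii) is needed, since (ii) is what prevents a lateral branch from being silently absorbed into the segment, and it is also where Properties 1 and 2 do their work by making $P$ a well-behaved continuous retraction onto $[x,y]$.
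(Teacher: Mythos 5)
Your forward direction (Definition \ref{Def1} $\Rightarrow$ Definition \ref{Def2}) matches the paper's argument and is fine: concatenating the two segments at the common endpoint $x$ yields an arc from $y$ to $z$, and uniqueness of arcs forces it to equal $[y,z]$. The converse also begins the way the paper does, with the nonexpansive (hence continuous) metric projection $P=P_{[x,y]}$, and you correctly isolate the crux, namely ruling out an excursion of $A$ off the segment. But the step you offer to close that case does not work as stated. From the gate relation $qc=qw+wc$ (which is indeed provable from (ii) and the nearest-point property of $w$) you observe that the composite $P_{[w,q]}\circ\gamma$ starts and ends at $w$ while attaining $q$, and you then assert that this ``forces the arc to retrace $[w,q]$, contradicting injectivity of $\gamma$.'' It does not: what retraces $[w,q]$ is the \emph{projection} of the arc, not the arc itself. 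The intermediate value theorem gives you, for each interior point $v$ of $[w,q]$, two parameters $s_1<t_0<s_2$ with $P_{[w,q]}(\gamma(s_1))=P_{[w,q]}(\gamma(s_2))=v$, but $\gamma(s_1)$ and $\gamma(s_2)$ need not coincide, and need not even lie on $[w,q]$; showing that they do is exactly the excursion-exclusion problem you set out to solve, so the argument is circular at its decisive step.

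The paper closes this gap differently. After reducing (by passing to a subarc) to the case $A\cap[x,y]=\{x,y\}$, it proves that $P_{[x,y]}$ is \emph{locally constant} along any subarc disjoint from $[x,y]$: if $d(u,z)<d(z,P(z))$ then $P(u)=P(z)$, a consequence of $[x,P(z)]\cap[P(z),z]=\{P(z)\}$ together with condition (ii). Compactness and connectedness of the subarc then force $P$ to be globally constant on it, contradicting the existence of $z_1,z_2$ with $P(z_1)\neq P(z_2)$, which follows from continuity of $P$ and $P(x)=x\neq y=P(y)$. If you want to keep your framework, replace the ``retracing'' claim by this local-constancy argument; it is the one place where condition (ii) genuinely enters the converse, and it is the ingredient your sketch is missing.
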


\begin{proof}
Suppose $M$ is a $\mathbb{R}$-tree in the sense of Definition \ref{Def1}, and let
$x,y\in M.$ Then by Definition \ref {Def1}, there is a unique arc joining $x$ and $y$
which is isometric to an interval in $\mathbb{R}.$ Hence it is a geodesic
(i.e., metric) segment. So we may denote it by $\left[  x,y\right]  .$ Thus we
have defined a unique metric segment $[x,y]$ $\left(  =\left[  y,x\right]
\right)  $ for each $x,y\in M,$ so (i) holds.

To see that (ii) holds, suppose $\left[  y,x\right]  \cap\left[  x,z\right]
=\left\{  x\right\}  .$ Then, $\left[  y,x\right]  \cup\left[  x,z\right]  $ is
an arc joining $y$ and $z;$ and by Definition \ref{Def1} it must be isometric to a
real line interval. Therefore it must be precisely the unique metric segment
$\left[  y,z\right]$. Now suppose $M$ is a $\mathbb{R}$-tree in the sense of Definition \ref{Def2}, and
let $x,y\in M.$ Then $\left[  x,y\right]  $ is an arc joining $x$ and $y,$ and
it is isometric with a real line interval. We must show that this is the only
arc joining $x$ and $y.$

 Suppose $A$ is an arc joining $x$ and $y,$ with
$A\neq\left[  x,y\right]  .$ By passing to a subarc, if necessary, we may
without loss of generality, assume $A\cap\left[  x,y\right]  =\left\{
x,y\right\}  .$ Let $P$ be a nonexpansive projection of $M$ onto $\left[
x,y\right]  .$ Since $P$ is continuous with $P\left(  x\right)  =x$ and
$P\left(  y\right)  =y,$ clearly there must exist $z_{1},z_{2}\in
A\backslash\left\{  x,y\right\}  $ such that $P\left(  z_{1}\right)  \neq
P\left(  z_{2}\right)  .$ Let $A_{1}$ denote the subarc of $A$ joining $z_{1}$
and $z_{2}.$ Fix $z\in A_{1}.$ If $u\in A_{1}$ satisfies $d\left(  u,z\right)
<d\left(  z,P\left(  z\right)  \right)  ,$ then it must be the case that
$P\left(  u\right)  =P\left(  z\right)  .$ Here we use the fact that $\left[
x,P\left(  z\right)  \right]  \cap\left[  P\left(  z\right)  ,z\right]
=\left\{  P\left(  z\right)  \right\}  ;$ hence by (ii) $\left[  x,z\right]
=\left[  x,P\left(  z\right)  \right]  \cup\left[  P\left(  z\right)
,z\right]  .$  Therefore, there is an open
neighborhood $N_{z}$ of $z$ such that $u\in N_{z}\cap A_{1}\Rightarrow
P\left(  u\right)  =P\left(  z\right)  .$ The family $\left\{  N_{z}\right\}
_{z\in A}$ covers $A_{1},$ so, by compactness of $A_{1}$ there exist $\left\{
z_{1},\cdot\cdot\cdot,z_{n}\right\}  $ in $A_{1}$ such that $A_{1}\subset%
{\displaystyle\bigcup_{i=1}^{n}}
N_{z_{i}}.$ However, this implies $P\left(  z_{1}\right)  =P\left(
z_{2}\right)  $ which is a contradiction. Therefore, $A=\left[  x,y\right]  ,$ and
since $\left[  x,y\right]  $ is isometric to an interval in $\mathbb{R}$, the
conditions of Definition \ref{Def1} are fulfilled.
\end{proof}
\begin{remark}
In the above proof we used the fact that the closest point projection onto a closed metrically convex subset is nonexpansive.  Definition \ref{Def2} is used in fixed point theory, mainly to investigate and see whether much of the known results for nonexpansive mappings remain valid in complete $CAT(0)$ spaces with asymptotic centre type of arguments used to overcome the lack of weak topology. For example it is shown that if $C$ is a nonempty connected bounded open subset of a complete $CAT(0)$ space $(M,d)$ and $T: \overline{C}\rightarrow M$ is nonexpansive, then either
\begin{enumerate}
\item $T$ has a fixed point in $\overline{C}$, or
\item $ 0 < \inf\{d(x,T(x)): \,\,\, x\in \partial {C} \}. $
\end{enumerate}
Application of these to metrized graphs has led to "topological" proofs of graph theoretic results; for example refinement of the fixed edge theorem (see \cite{KirkH}, \cite{EK}, \cite{Kirk}). Definition \ref{Def1} used to construct T-theory and its relation to tight spans (see \cite {DMT}) and best approximation in $\mathbb{R}$-trees (see \cite{KP}).
\end{remark}
\begin{theorem}
{Definition \ref{Def3}}, {Definition \ref{Def4}} and {Definition \ref{Def5}} of $\delta$-hyperbolic spaces are equivalent.
\end{theorem}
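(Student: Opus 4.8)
The plan is to separate the purely metric definitions (Definitions \ref{Def3} and \ref{Def4}, which make sense in any metric space) from the geodesic one (Definition \ref{Def5}), and to prove the chain of equivalences in that order. First I would prove \ref{Def3} $\Leftrightarrow$ \ref{Def4} by direct algebra, with no change in $\delta$. Substituting $(x,z)_p=\frac{1}{2}(d(x,p)+d(z,p)-d(x,z))$ and clearing the factor $\frac{1}{2}$, the Gromov inequality becomes $d(x,p)+d(z,p)-d(x,z)\ge d(y,p)+\min\{d(x,p)-d(x,y),\,d(z,p)-d(y,z)\}-2\delta$. Splitting into the two cases according to which term attains the minimum, each case simplifies to one of $d(x,z)+d(y,p)\le d(x,y)+d(z,p)+2\delta$ or $d(x,z)+d(y,p)\le d(x,p)+d(y,z)+2\delta$, and the defining hypothesis of that case identifies the chosen right-hand side as the larger of the two candidate sums. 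Since both definitions are required to hold for every assignment of the four points, the Gromov inequality with constant $\delta$ and the four-point inequality with the same $2\delta$ are literally the same family of inequalities; hence \ref{Def3} and \ref{Def4} are equivalent with identical constant.

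For the equivalence with Definition \ref{Def5} I would work in a geodesic space (by the Bonk--Schramm embedding quoted above there is no loss in assuming the ambient space is geodesic). The engine is a comparison between the Gromov product and distance to a geodesic: for any $p$ the triangle inequality gives $(x,y)_p\le d(p,[x,y])$ for free, while $\delta$-thinness yields a reverse estimate $d(p,[x,y])\le (x,y)_p+c\delta$ for an absolute constant $c$. Granting this, \ref{Def5} $\Rightarrow$ \ref{Def3} follows by applying the comparison to the three triangles $\triangle pxy$, $\triangle pyz$ and $\triangle pxz$, combining the forward bound on $\min\{(x,y)_p,(y,z)_p\}$ with the reverse bound on $(x,z)_p$, and using the geometric estimate $d(p,[x,z])\ge\min\{d(p,[x,y]),d(p,[y,z])\}-c'\delta$, which itself comes from chasing the thin triangles built on these three geodesics; the result is the Gromov inequality with an enlarged constant.

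Conversely, for \ref{Def4} (equivalently \ref{Def3}) $\Rightarrow$ \ref{Def5}, I would fix a geodesic triangle $\triangle xyz$ and a point $a\in[x,y]$, locate $a$ by its distance to $x$ relative to the internal point (the point of $[x,y]$ at distance $(y,z)_x$ from $x$, so that the comparison tripod is exact on each side), and then apply the four-point inequality to the quadruple $x,y,z,a$ to bound $d\bigl(a,[x,z]\cup[z,y]\bigr)$ by a constant multiple of $\delta$. This shows every geodesic triangle is $\delta'$-thin with $\delta'$ proportional to $\delta$, closing the loop.

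The main obstacle is that, unlike the clean \ref{Def3} $\Leftrightarrow$ \ref{Def4} step, the equivalences involving \ref{Def5} do not preserve the value of $\delta$: they hold only up to an absolute multiplicative constant, so ``equivalent'' here should be read as ``$\delta$-hyperbolic for some constant in each sense.'' Within this, the genuinely delicate point is the reverse comparison $d(p,[x,y])\le (x,y)_p+c\delta$; it fails in general metric spaces and its proof is exactly where the thin-triangle hypothesis is consumed, via a triangle-chase that places the nearest-point projection of $p$ onto $[x,y]$ near an internal point and then controls it through thinness. Pinning down the smallest workable constant $c$, and hence the sharpest form of the statement, is the part I expect to require the most care.
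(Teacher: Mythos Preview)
Your plan is correct and follows essentially the same route as the paper: the equivalence of Definitions \ref{Def3} and \ref{Def4} is done by the identical algebraic rearrangement with no change of constant, and the two directions between Definitions \ref{Def3} and \ref{Def5} are handled via internal points of geodesic triangles and the comparison between $(x,y)_p$ and $d(p,[x,y])$. The paper carries this out more concretely than your sketch---in particular it works directly with the internal point $a_p\in[x,y]$ of $\triangle xyp$ and applies thinness of $\triangle xyz$ at that point, rather than packaging the argument through your intermediate estimate $d(p,[x,z])\ge\min\{d(p,[x,y]),d(p,[y,z])\}-c'\delta$---and thereby obtains the explicit constant $3\delta$ in both directions, which is the sharper bookkeeping you anticipated needing care with.
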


\begin{proof}
We suppose $X$ be a geodesic Gromov $\delta -hyperbolic$ space below. We first show Definition\ref{Def3} implies Definition\ref{Def4}.
 By Definition\ref{Def3}, we have
\begin{equation*}
d\left( x,p\right) +d\left( y,p\right) -d\left( x,y\right) \geq \min \left\{
d\left( x,p\right) +d\left( z,p\right) -d\left( x,z\right) ,d\left(
y,p\right) +d\left( z,p\right) -d\left( y,z\right) \right\} -2\delta .
\end{equation*}

\noindent Without loss of generality we can suppose%
\begin{equation*}
d\left( x,p\right) +d\left( z,p\right) -d\left( x,z\right) \geq d\left(
y,p\right) +d\left( z,p\right) -d\left( y,z\right)
\end{equation*}%
$i.e.$%
\begin{equation*}
d\left( x,p\right) +d\left( y,z\right) \geq d\left( y,p\right) +d\left(
x,z\right) .
\end{equation*}

\noindent So we have $d\left( x,p\right) +d\left( y,p\right) -d\left( x,y\right) \geq
d\left( y,p\right) +d\left( z,p\right) -d\left( y,z\right) -2\delta \ $  or equivalently \ $d\left( x,p\right) +d\left( y,z\right) +2\delta \geq d\left(
z,p\right) +d\left( x,y\right) .$

\noindent The same conclusion follows if we take
\begin{equation*}
d\left( x,p\right) +d\left( z,p\right) -d\left( x,z\right) \leq d\left(
y,p\right) +d\left( z,p\right) -d\left( y,z\right)
\end{equation*}%
and we have $d\left( z,p\right) +d\left( x,y\right) \leq \max \left\{
d\left( x,y\right) +d_{xz},d_{xp}+d_{yz}\right\} +2\delta .$

\bigskip

\noindent To show Definition\ref{Def4} implies Definition \ref{Def3},without loss of generality, we suppose

\begin{equation*}
d\left( x,z\right) +d\left( y,p\right) \leq d\left( x,p\right) +d\left(
y,z\right) .
\end{equation*}

\noindent So, we have
\begin{equation*}
d\left( x,y\right) +d\left( z,p\right) \leq d\left( x,p\right) +d\left(
y,z\right) +2\delta .
\end{equation*}

\noindent Then
\begin{eqnarray*}
d\left( y,p\right) +d\left( z,p\right) -d\left( y,z\right) &\leq &d\left(
x,p\right) +d\left( z,p\right) -d\left( x,z\right) \\
d\left( y,p\right) +d\left( z,p\right) -d\left( y,z\right) -2\delta &\leq
&d\left( x,p\right) +d\left( y,p\right) -d\left( x,y\right)
\end{eqnarray*}
and we get
\begin{equation*}
d\left( x,p\right) +d\left( y,p\right) -d\left( x,y\right) \geq \min \left\{
d\left( y,p\right) +d\left( z,p\right) -d\left( y,z\right) ,d\left(
x,p\right) +d\left( z,p\right) -d\left( x,z\right) \right\} -2\delta .
\end{equation*}%

\bigskip

\noindent To prove equivalence of Definition \ref{Def3} and Definition \ref{Def5}, we need the following property:

\noindent For any geodesic triangle $\triangle xyz$
in metric space $\left( M,d\right) \ $we can find three points on each side
denoted by $a_{x}\ $on $\left[ y, z\right] ,a_{y}$ on $\left[ x, z\right] $ and $%
a_{z}$ on $\left[ x, y\right] \ $such that

\begin{eqnarray*}
d\left( a_{x},y\right) &=&d\left( a_{z},y\right) =\left( x,z\right) _{y} \\
d\left( a_{z},x\right) &=&d\left( a_{y},x\right) =\left( y,z\right) _{x} \\
d\left( a_{y},z\right) &=&d\left( a_{x},z\right) =\left( x,y\right) _{z}.
\end{eqnarray*}%

\noindent To show Definition\ref{Def5} implies Definition\ref{Def3}, for any $x,y,p\in X,$ we will show that for any $z\in X $
following holds:

\begin{equation*}
\left( x,y\right) _{p}\geq \min \left\{ \left( x,z\right) _{p},\left(
z,y\right) _{p}\right\} -3\delta .
\end{equation*}

\begin{center}
\includegraphics [scale=.26]  {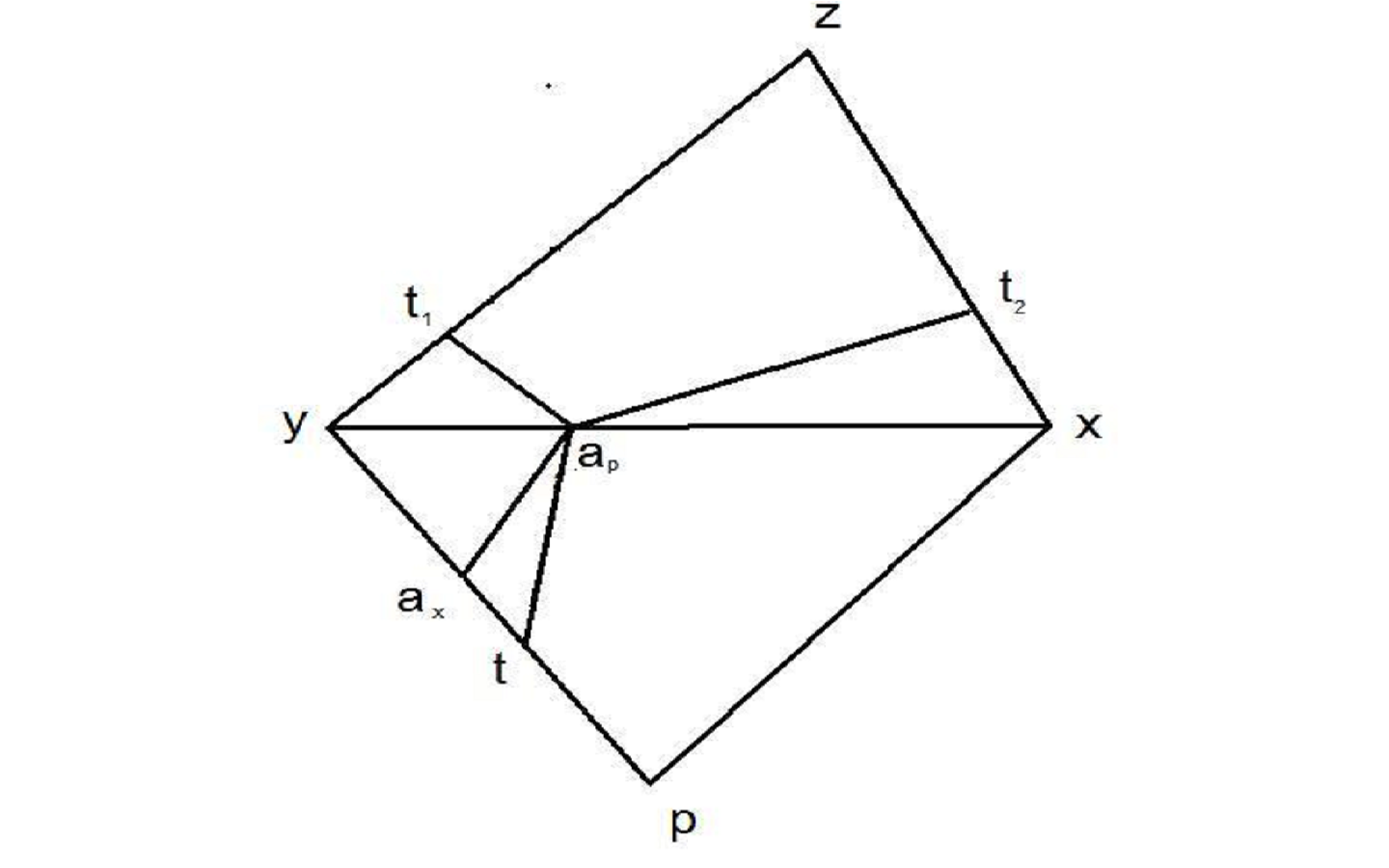}

\end{center}

\begin{center}
Figure $1$
\end{center}


\noindent By the above stated property, in triangle $\triangle xyp\ $ we choose
three points $a_{p},a_{x}$ and $a_{y}$ in $\left[ x, y\right] ,\left[ p, y\right]
$ and$\ \left[ p, x\right] $ as shown in Figure 2 such that

\begin{eqnarray*}
d\left( y,a_{p}\right) &=&\left( p,x\right) _{y}=d\left( y,a_{x}\right) \\
d\left( x,a_{p}\right) &=&\left( p,y\right) _{x}=d\left( x,a_{y}\right) \\
d\left( p,a_{x}\right) &=&\left( x,y\right) _{p}=d\left( p,a_{y}\right)
\end{eqnarray*}

\noindent Without loss of generality we assume $t\in \left[ p, y%
\right] \ $such that $d\left( a_{p},t\right) \leq \delta $ and \\$d\left(
t,y\right) >d\left( a_{x},y\right) ,\ $then in triangle $\triangle yta_{p},$
\begin{equation*}
d\left( t,y\right) <d\left( t,a_{p}\right) +d\left( a_{p},y\right) =\delta
+d\left( a_{x},y\right)
\end{equation*}%
so$\ d\left( t,a_{x}\right) <\delta $ and the same conclusion follows if we suppose $d\left( t,y\right)
<d\left( a_{x},y\right) $.

\noindent Then for $\triangle a_{p}a_{x}t,$
\begin{eqnarray*}
d\left( a_{p},a_{x}\right)
<d\left( t,a_{x}\right) +d\left( a_{p},t\right) <2\delta .
\end{eqnarray*}

\noindent For any $z\in X,$ consider $\triangle xyz$ and choose $t_{1}\in \left[
y, z\right] $ and $t_{2}\in \left[ x, z\right] \ $such that $d\left(
a_{p},t_{1}\right) $ and $d\left( a_{p},t_{2}\right) $ are the shortest
distances from $a$ to $\left[ y, z\right] $ and $\left[ x, z\right]$, therefore
\begin{equation*}
\min \left\{ d\left( a,t_{1}\right) ,d\left(
a,t_{2}\right) \right\} \leq \delta .
\end{equation*}

\noindent Then looking at triangles $\triangle pa_{p}t_{1}$ and $\triangle pa_{p}t_{2}$ we have
\begin{eqnarray*}
\min \left\{ d\left( p,t_{1}\right) ,d\left( p,t_{2}\right) \right\} \leq
\min \left\{ d\left( a_{p},t_{1}\right) ,d\left( a_{p},t_{2}\right) \right\}
+d\left( p,a_{p}\right) \\
\leq \delta +d\left( p,a_{p}\right) \leq \delta
+d\left( p,a_{x}\right) +d\left( a_{p},a_{x}\right) \leq 3\delta +\left(
x,y\right) _{p}.
\end{eqnarray*}

\noindent Since
\begin{eqnarray*}
\left( y,z\right) _{p}=\frac{1}{2}\left( d\left( y,p\right)
+d\left( z,p\right) -d\left( y,z\right) \right)=\frac{1}{2}\left( d\left(y,p\right) -d\left( y,t_{1}\right) +d\left( z,p\right) -d\left(
z,t_{1}\right) \right)
\end{eqnarray*}
by triangle inequality we have $\left( y,z\right) _{p}\leq d\left( p,t_{1}\right)$ and similarly for $\left(
x,z\right) _{p}\leq d\left( p,t_{2}\right).$

\noindent Then $\min \left( \left( y,z\right) _{p},\left( x,z\right)
_{p}\right) \leq \min \left\{ d\left( p,t_{1}\right) ,d\left( p,t_{2}\right)
\right\} \leq 3\delta +\left( x,y\right) _{p}.$

\bigskip

\begin{center}
\includegraphics [scale=.2]  {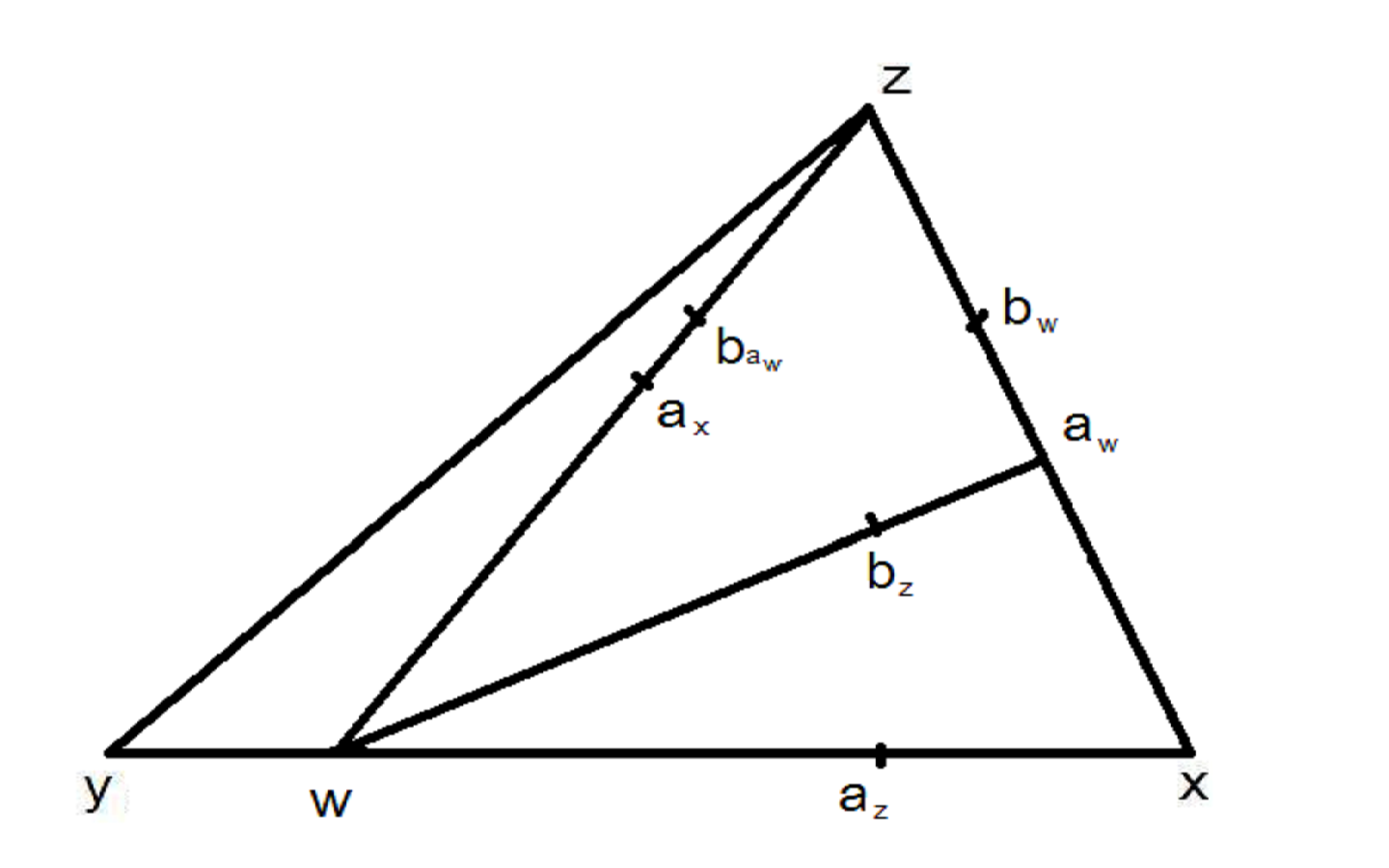}

\end{center}

\begin{center}
Figure $2$
\end{center}

To show Definition \ref{Def3} implies Definition \ref{Def5}, let $\triangle xyz$ be a
geodesic triangle and $w\in \left[ x, y\right] ,$ see Figure 2. Let $d\left( x,%
\left[ y, z\right] \right) $ denote the shortest distance from $x$ to the side $\left[
yz\right] ,$ without loss of generality we assume $\left( x,z\right) _{w}\geq \left( y,z\right) _{w}.$ Then by Definition \ref{Def3}
\begin{equation*}
\left( x,y\right) _{w}\geq \min \left\{ \left( x,z\right) _{w},\left(
y,z\right) _{w}\right\} -\delta =\left( x,z\right) _{w}-\delta \
\end{equation*}%
which implies $\delta \geq \left( x,z\right) _{w}.$

\noindent Next we consider the triangle $\triangle xzw$ and find three
points $a_{x},a_{z}$ and $a_{w}$ on each side with the previous property. Then
\begin{eqnarray*}
d\left( w,a_{x}\right) =\left( x,z\right) _{w}  \\
d\left(w,a_{w}\right) \geq d\left( w,\left[ x ,z\right] \right).
\end{eqnarray*}

\noindent Similarly, in $\triangle xa_{w}z\ $ one can find three points $%
b_{z},b_{w}$ and $b_{a_{w}}$ on $\left[ w, a_{w}\right] ,\left[ a_{w}, z\right] $
and $\left[ z, w\right] ,$ which satisfy the previous property and we have
$d\left( w,b_{a_{w}}\right) =\left(
z,a_{w}\right) _{w}.$

\noindent We assume  $\left( z,a_{w}\right) _{w}<\left(
x,a_{w}\right) _{w}\, $ then $d\left( w,a_{x}\right) $ $<d\left(
w,b_{a_{w}}\right).$

\noindent So,

\noindent $\delta \geq \min \left\{ \left( z,a_{w}\right)
_{w},\left( x,a_{w}\right) _{w}\right\} -\left( x,z\right) _{w}=\left(
z,a_{w}\right) _{w}-\left( x,z\right) _{w}=d\left( b_{a_{w}},w\right)
-d\left( a_{x},w\right)$

\noindent =$d\left( a_{x},b_{a_{w}}\right) =\left( a_{w},z\right) _{w}-\left(
x,z\right) _{w}$

\noindent =$\frac{1}{2}\left( d\left( a_{w},w\right) +d\left( x,z\right)
-d\left( a_{w},z\right) -d\left( x,w\right) \right)$

\noindent =$\frac{1}{2}\left( d\left( a_{w},x\right) +d\left( w,b_{z}\right)
+d\left( a_{w},b_{z}\right) -d\left( w,a_{z}\right) -d\left( a_{w},x\right)
\right)$

\noindent =$\frac{1}{2}\left( d\left( a_{x},b_{a_{w}}\right) +d\left(
a_{w},b_{w}\right) \right)$

\noindent which implies $d\left( a_{x},b_{a_{w}}\right) =d\left(
a_{w},b_{w}\right) .$

\noindent Thus, $d\left( w,a_{w}\right) =d\left( w,b_{z}\right) +d\left(
b_{z},a_{w}\right) =d\left( w,b_{a_{w}}\right) +d\left( a_{w},b_{w}\right)
=d\left( w,a_{x}\right) +2d\left( a_{w},b_{a_{w}}\right) \leq \left(
x,z\right) _{w}+2\delta .$

\noindent Then $d\left( w,\left[ x, z\right] \right) \leq d\left( w,a_{w}\right)
\leq \left( x,z\right) _{w}+2\delta \leq 3\delta .\ \ \ \ \ \ \ \ \ \ \ \ \
\ \ \ \ \ \ \ \ \ \ \ \ \ \ \ \ \ \ \ \ \ \ \ \ \ \ $

\end{proof}


\begin{remark}
In \cite{BonkFoe} Bonk and Foertsch use the inequality $(1.1)$ repeatedly to define a new
space, called  $AC_{u}\left( \kappa \right)$-space by introducing the notion of upper curvature bounds for Gromov hyperbolic spaces. This space is equivalent to a $
\delta$-hyperbolic space and furthermore it establishes a precise relationship between CAT($\kappa)$ spaces and $\delta$-hyperbolic spaces. It is well known that any CAT($\kappa)$ space with negative $\kappa $ is a $\delta$-hyperbolic space for some $\delta$. In \cite{BonkFoe} it is shown that a CAT($\kappa)$ space with negative $\kappa $  is just an $AC_{u}\left( \kappa \right)$-space. Moreover, following the arguments in \cite{BonkFoe}, Fournier, Ismail and
Vigneron in  \cite{Fournier} compute an approximate value for $\delta$ .

\end{remark}



\bibliographystyle{amsplain}

\begin{thebibliography}{20}
    \bibitem{Akso}
        A. G. Aksoy, B. Maurizi, \emph{Metric trees, hyperconvex hulls,
        and extensions}, Turkish Math. J. 32 (2008), 219--234.

    \bibitem{AkBo}
        A. G. Aksoy, M. S. Borman, and A. L. Westfahl, \emph{ Compactness
        and measures of noncompactness in metric trees}, in:
        Proc. of the Inter. Symp. on Banach and Funct. Sp. II, 
        M. Kato and L. Maligranda (eds.), Yokohama Publishers, Yokohama, 2008, 1--16.

        \bibitem {AkTi} A. G. Aksoy and T. Oikhberg, \emph{Some Results on Metric Trees}, Banach Center Pub. Vol.91, (2010), 9-34.



     \bibitem{AkKh}
        A. G. Aksoy and M. A. Khamsi, \emph{A selection theorem in
        metric trees}, Proc. of Amer. Math. Soc. 134
        (2006), 2957--2966.
\bibitem{ap} N. Aronszajn and P. Panitchpakdi, \emph{Extension of
uniformly continous transformations and hyperconvex metric spaces},
Pacific J. Math. 6 (1956), 405--439.


    \bibitem{Bart}
        I. Bartolini, P. Ciaccia, and M. Patella, \emph{String matching with metric trees
        using approximate distance}, SPIR, Lecture Notes in Computer Science,
        Springer-Verlag, 2476 (2002), 271--283.






    \bibitem{Best}
        M. Bestvina, \emph{R-trees in topology, geometry, and group theory},
         Handbook of geometric topology, 55-91, North-Holland, Amsterdam, 2002.

    \bibitem{Blum}
        L. M. Blumenthal, \emph{Theory and applications of distance geometry},
        Oxford University Press, London, 1953.
\bibitem{BonkSch}
M. Bonk, O. Schramm, \emph{Embeddings of Gromov hyperbolic spaces}, GAFA, Geom. Func. Anal.,vol 10, (2000) 266-306.
\bibitem{BonkFoe}
M. Bonk, T. Foertsch,\emph{Asymptotic upper curvature bounds in coarse geometry}, Math Z. (2006), 253:753-785.



    \bibitem{Brid}
        M. Bridson and A. Haefliger, \emph{Metric spaces of nonpositive curvature},
        Grundlehren der Mathematischen Wissenschaften, vol. 319,
        Springer-Verlag, Berlin, 1999.

    \bibitem {Brown} K. S. Brown, \emph{Buildings}, Springer-Verlag, New York, 1989.

\bibitem{buneman} P. Buneman, \emph{A note on the metric properties of
trees}. J. Combin. Theory Ser. B 17 (1974), 48--50.







\bibitem{Dr}
A. Dress, \emph{Trees, tight extensions of metric spaces, and the cohomological dimension of certain groups: a note on combinatorial properties of metric spaces},
Adv. Math. 53 (1984), 321--402.

\bibitem{DMT}
A. Dress, V. Moulton, and W. Terhalle, \emph{$T$-theory: an overview},

European J. Combin. 17 (1996), 161--175.




\bibitem{EK}
R. Espinola, W.A. Kirk, \emph{Fixed point theoremss in $\mathbb{R}$-trees with applications to graph theory}, Topology Appl. 153 (2006), no.7, 1046-1055.

    \bibitem {DP}
    A. Dyubina and I. Polterovich, \emph{Explicit construction of universal $\mathbb{R}$-trees and
    asymptotic geometry of hyperbolic spaces},
    Bull. London Math. Soc. 33 (2001), 727--734.


 \bibitem{Ev}
    S. Evans, {\em Probability and real trees}, Springer, Berlin, 2008.


\bibitem{Fournier}
 H. Fournier, A. Ismail and A.Vigneron, \textit{Computing the
Gromov hyperbolicity of a discrete metric space.}{\ }%
Http://arxiv.org/abs/1210.3323.

\bibitem{godard}
A.~Godard, \emph{Tree metrics and their Lipschitz-free spaces},
Proc. Amer. Math. Soc. 138 (2010), no 12, 4311-4320.


\bibitem{Goebel}
K.~Goebel and S.~Reich, \emph{Uniform convexity, hyperbolic geometry, and nonexpansive mappings} Monographs and textbooks in pure and applied mathematics, vol. 83, Marcel Dekker, 1984.
\bibitem {Gromov}
 M. Gromov \emph{Hyperbolic groups,  ``Essays in Group Theory"(G. Gersten, ed.)}, Math. Sci. Res. Inst. Publ.Springer (1987), 75-263.



\bibitem {Ibr1}
Z. Ibragimov, \emph{A cononical filling of ultrametric spaces}, Preprint.
\bibitem{Ibr2}
Z. Ibragimov,\emph{ Hyperbolizing hyperspaces} Michigan Math., J. 60 (2011), no.1, 215-239.

\bibitem{isbell} J. R. Isbell, \emph{Six theorems about injective metric
spaces}. Comment. Math. Helv. 39 (1964), 439--447.



\bibitem{Julian}
J. Julian, \emph{Interplay between interior and boundary geometry in Gromov hyperbolic spaces}. Geom. Dedicta 149 (2010), 129-154.
    \bibitem{KirkH}
        W. A. Kirk, \emph{Hyperconvexity of $\mathbb{R}$-trees}, Fund. Math. 156 (1998),67-72.
    \bibitem{Kirk}

    W.A. Kirk \emph{Fixed point theorems in CAT(0) spaces and $\mathbb{R}$-trees}, Fixed Point Theory Appl. 2004, no.4, 309-316.

    \bibitem{KirkHB}
        W. A. Kirk and B. Sims (editors), \emph{Handbook of metric fixed point theory},
        Kluwer Acad. Publ., Dordrecht, 2001.
        \bibitem{KP}
        W.A. Kirk, B. Panyanak, \emph{ Best approximation in $\mathbb{R}$-trees},
Numer. Funct. Anal. Optim. 28 (2007), no.5-6, 681-690.






             \bibitem{mmot}J. C. Mayer, L. K. Mohler, L. G. Oversteegen and E. D.
Tymchatyn, \emph{Characterization of separable metric
$\mathbb{R}$-trees}, Proc. Amer. Math. Soc. 115 (1992), no. 1, 257--264.

\bibitem{mo}J. C. Mayer and L. G. Oversteegen, \emph{A Topological
Charecterization of $\mathbb{R}$-trees}, Trans. Amer. Math. Soc.
320 (1990), no. 1, 395--415.









    \bibitem{Semp}
        C. Semple and M. Steel, \emph{Phylogenetics}, Oxford Lecture Series
        in Mathematics and its Applications, 24, 2003.



    \bibitem{Tits}
        J. Tits, \emph{A Theorem of Lie-Kolchin for trees},
        in: Contributions to algebra: a collection of papers dedicated to Ellis Kolchin,
        H. Bass, P. Cassidy, and J. Kovacic (eds.),
        Academic Press, New York, 1977.
\bibitem {Vaisala}
J. V\"{a}is\"{a}l\"{a}, \emph{Gromov hyperbolic spaces}, Expo. Math., 23 (2005), no. 3, 187-231.
\end{thebibliography}



\noindent
\mbox{~~~~~~~}Asuman G\"{u}ven AKSOY\\
\mbox{~~~~~~~}Claremont McKenna College\\
\mbox{~~~~~~~}Department of Mathematics\\
\mbox{~~~~~~~}Claremont, CA  91711, USA \\
\mbox{~~~~~~~}E-mail: aaksoy@cmc.edu \\ \\
\noindent
\mbox{~~~~~~~}Sixian JIN\\
\mbox{~~~~~~~}Claremont Graduate University\\
\mbox{~~~~~~~}Department of Mathematics\\
\mbox{~~~~~~~}Claremont, CA, 91711, USA\\
\mbox{~~~~~~~}E-mail: Sjin@cgu.edu\\\\

\end{document}